\documentclass[10pt,a4paper]{article}
\linespread{1.2}
\setlength{\parindent}{24pt}
\setlength{\parskip}{1ex plus 0.5ex minus 0.2ex}

\usepackage[T1]{fontenc}
\usepackage[utf8]{inputenc}
\usepackage{authblk}
\usepackage{amsmath,amssymb,amsthm,esint,bm}%,mathrsfs
\usepackage{mathrsfs}
\usepackage{bookmark}
\usepackage{amsmath}
\allowdisplaybreaks[3]

\newtheorem{definition}{Definition}[section]
\newtheorem{theorem}[definition]{Theorem}

\theoremstyle{remark}
\newtheorem{remark}[definition]{Remark}
\numberwithin{equation}{section}

\setlength{\textwidth}{15.5cm}
\setlength{\textheight}{22.5cm}
\setlength{\topmargin}{-.5cm}
\setlength{\oddsidemargin}{4mm}
\setlength{\evensidemargin}{4mm}
\setlength{\abovedisplayskip}{3mm}
\setlength{\belowdisplayskip}{3mm}
\setlength{\abovedisplayshortskip}{0mm}
\setlength{\belowdisplayshortskip}{2mm}
\setlength{\baselineskip}{12pt}
\setlength{\normalbaselineskip}{12pt}
% ----------------------------------------------------

\title{Symmetry of Positive Solutions for the Fractional Schr$ \ddot{\textrm{o}}$dinger Equations with Choquard-type Nonlinearities}

 % 修改机构名称的字体与大小
 % 去掉 and 前的逗号

\title{Symmetry of Positive Solutions for the Fractional Schr$ \ddot{\textrm{o}}$dinger Equations with Choquard-type Nonlinearities}

\author[a]{Xiaoya Huang}
\author[b]{Zhenqiu Zhang\thanks{Corresponding author.}}

\affil[a]{School of Mathematical Sciences, Nankai University, Tianjin, 300071, P.R. China}
\affil[b]{School of Mathematical Sciences and LPMC, Nankai University, Tianjin, 300071, P.R. China}
% 使用 \thanks 定义通讯作者

 % 修改机构名称的字体与大小
 % 去掉 and 前的逗号q

\date{\today}
%\CTEXoptions[today=old]

\usepackage{hyperref}
\begin{document}
\maketitle
\footnotetext[1]{E-mail: 2120170065@mail.nankai.edu.cn (X. Huang), zqzhang@nankai.edu.cn (Z. Zhang).}
\begin{abstract}
This paper deals with the following fractional Schr$ \ddot{\textrm{o}}$dinger equations with Choquard-type nonlinearities
\begin{equation*}
\left\{\begin{array}{r@{\ \ }c@{\ \ }ll}
 (-\Delta)^{\frac{\alpha}{2}}u + u - C_{n,-\beta} \,(|x|^{\beta-n}\ast u^{p})\, u^{p-1}& = &0      & \mbox{in}\ \ \mathbb{R}^{n}\,, \\[0.05cm]

 u & > & 0 & \mbox{on}\ \  \mathbb{R}^{n},
\end{array}\right.
\end{equation*}
where $ 0< \alpha,\beta < 2, 1\leq p <\infty  \,\,and\,\, n\geq 2.   $
First we construct a decay result at infinity and a narrow
region principle for related equations. Then we establish the radial symmetry of positive
solutions for the above equation with the generalized direct method of moving planes.

Mathematics Subject classification (2010): 35A30; 35R11; 35B06.

Keywords: Fractional Schr$ \ddot{\textrm{o}}$dinger equations; Choquard-type; direct method of moving planes; radial symmetry. \\

\end{abstract}
%\renewcommand{\thepage}{\roman{page}}
%\setcounter{page}{1}

%%\tableofcontents

\section{Introduction.}\label{section1}

In this paper, we study the fractional Schr$ \ddot{\textrm{o}}$dinger equations with Choquard-type nonlinearities as follows
\begin{equation}\label{Big1}
\left\{\begin{array}{r@{\ \ }c@{\ \ }ll}
 (-\Delta)^{\frac{\alpha}{2}}u + u - C_{n,-\beta} \,(|x|^{\beta-n}\ast u^{p})\, u^{p-1}& = &0      & \mbox{in}\ \ \mathbb{R}^{n}\,, \\[0.05cm]

 u & > & 0 & \mbox{on}\ \  \mathbb{R}^{n},
\end{array}\right.
\end{equation}
where $ 0< \alpha,\beta < 2, 1\leq p <\infty  \,\,and\,\, n\geq 2. $ The fractional Laplacian is a nonlinear nonlocal pseudo differential operators defined as
\begin{equation}\label{Frac-Laplacian}
\begin{array}{r@{\ \ }c@{\ \ }ll}
(-\Delta)^\frac{\alpha}{2} u(x) &:=& C_{n,\alpha} PV  \displaystyle{\int_{\mathbb{R}^n} \frac{u(x)-u(y)}{|x-y|^{n+\alpha}} dy}, \\
&=&C_{n,\alpha}\underset{\varepsilon \rightarrow \infty} {lim} \displaystyle{\int_{\mathbb{R}^n \backslash B_{\varepsilon}(x)} \frac{u(x)-u(y)}{|x-y|^{n+\alpha}} dy},
\end{array}
\end{equation}
here $C_{n,\alpha}$ is a normalization constant.
To make sense for the integrals, we require
$u \in C^{1,1}_{loc} \cap L_{\alpha} $,\,
and set
$$ L_{\alpha} = \{ u \in L^{1}_{loc}(R^{n}) \mid \int_{\mathbb{R}^n} \frac{|u(x)|}{1+|x|^{n+\alpha}} d x < \infty \}.$$

The Choquard equation arises in a variety of applications such as quantum mechanics, physics of laser beams, physics of multiple-particle systems, which we refer to\cite{Gro}. We remark that the nonlinear Choquard equation is a certain approximation to Hartree-Fock theory for a component plasma\cite{LiBa}. It is worth mentioning that \cite{MZ} considered the fractional Choquard equations with the nonhomogeneous term $(|x|^{\beta-n}\ast u^{p})\,u^{p-1}$. Afterwards, Ma and Zhang\cite{mz} extended their results to Choquard equations with fractional p-Laplacian.

The fractional Laplacian has attracted much attention in recent years. It has been widely applied in mathematical physics\cite{SX}, image processing\cite{GO}, finance\cite{CT}, and so on. For more backgrounds on the fractional Laplacian operator $(-\Delta)^{\frac{\alpha}{2}} $, we refer to\cite{CSS,BN,S}. Nevertheless, since the fractional Laplacian is nonlocal, many traditional methods that deal with the local operator cannot be applied directly. To overcome this difficulty, the pioneering work can be traced back to Caffarelli and Silvestre\cite{CaSi}. They reduced the nonlocal problem into a local one in higher dimensions by introducing the extension method. This method has been applied successfully in investigating the equations with $(-\Delta)^{s}$, a series of related problems have been studied from then on(cf. \cite{BCPS,CZ} and the references therein). In addition, another effective tool to handle the higher order fractional Laplacian is the method of moving planes in integral forms, which turns a given pseudo differential equations into their equivalent integral equations, we refer to \cite{CFY,CLO1,CLO2} for details. However, even though using the extension method or the integral equations method, sometimes we still need to impose conditions on the range of $s$ or some integrability conditions on the solutions additionally. Furthermore, the methods we mentioned above are not applicable to other kinds of nonlinear nonlocal operators, such as the fully nonlinear nonlocal operator and fractional p-Laplacian(p$\neq 2$).

Lately, Chen et al.\cite{CLL} developed a direct method of moving planes which can overcome the difficulties above. They obtained the nonexistence, symmetry and monoticity of positive solutions for existence semi-linear equations involving the fractional Laplacian. We are particularly interested in the method using in handling the nonlinear Schr$ \ddot{\textrm{o}}$dinger equation in the article. We also refer to \cite{MaZhao,QY} for related interesting results of Schr$ \ddot{\textrm{o}}$dinger equations.

Inspired by the literatures above, we naturally consider the properties of positive solutions for the fractional Schr$ \ddot{\textrm{o}}$dinger equations with Choquard-type nonlinearities based on the extended direct method of moving planes. In this paper, we construct a decay result at infinity and a narrow
region principle for the related equations first in order to obtain the symmetry of solutions to the problem\eqref{Big1}.

Throughout the paper, let
\begin{equation}\label{v}
v(x)= C_{n,-\beta}|x|^{\beta-n}\ast u^{p} = C_{n,-\beta} \int _{R^{n}} \frac{u^{p}(y)}{|x-y|^{n-\beta}}dy,
\end{equation}
then we have
\begin{equation*}
 (-\Delta)^{\frac{\beta}{2}}v= u^{p}  \,\,\,\,\,\,\,\,\,\mbox{in}\,\,\,\,\, \mathbb{R}^{n}.
 \end{equation*}
Therefore, \eqref{Big1} is equivalent to
\begin{equation}\label{Big2}
\left\{\begin{array}{r@{\ \ }c@{\ \ }ll}
\left(-\Delta\right)^{\frac{\alpha}{2}} u(x) + u(x)&=& v(x)u^{p-1}(x), & \ \ x\in \mathbb{R}^{n}\,, \\[0.05cm]
\left(-\Delta\right)^{\frac{\beta}{2}} v(x) &=& u^{p}(x), & \ \ x\in \mathbb{R}^{n}\,, \\[0.05cm]
u(x) > 0, && v(x)>0, & \ \ x\in\mathbb{R}^{n}\,.
\end{array}\right.
\end{equation}
Hence,the project to study \eqref{Big1} turns out to consider its equivalent problem \eqref{Big2}.

Before stating our main results, we first give some basic notations. Let
\begin{equation*}
T_{\lambda} =\{x \in \mathbb{R}^{n}|\, x_1=\lambda, \mbox{ for some } \lambda\in \mathbb{R}\}
\end{equation*}
be the moving plane,\quad $\Sigma_{\lambda} =\{x \in \mathbb{R}^{n} | \, x_1<\lambda\}$ be the region to the left of the plane,$\quad \widetilde{\Sigma}_{\lambda}=\mathbb{R}^{n}\setminus\Sigma_{\lambda}$,\quad and $x^{\lambda}=(2\lambda-x_{1},x_{2},\cdot\cdot\cdot,x_{n})$ be the reflection of the point $x=(x_{1},x_{2},\cdot\cdot\cdot,x_{n})=(x_{1},x')$ about the plane $T_{\lambda}$.

To compare the values of $u(x)$ with $u(x^{\lambda})$ and $v(x)$ with $v(x^{\lambda})$, we denote
\begin{equation*}
\left\{\begin{array}{r@{\ \ }c@{\ \ }ll}
U_{\lambda} (x)& :=&u(x^{\lambda}) - u(x), \\[0.05cm]
V_{\lambda} (x)& :=&v(x^{\lambda}) - v(x).
\end{array}\right.
\end{equation*}
Obviously, $U_{\lambda}$ and $V_{\lambda}$ are anti-symmetric
functions, i.e., $U_{\lambda}(x^{\lambda})=-U_{\lambda}(x)$ and
$V_{\lambda}(x^{\lambda})=-V_{\lambda}(x)$. Furthermore,
$C$ denotes a constant whose value may be different from line to
line, and only the relevant dependence is specified in this paper.

 To be precise, we state our main result as follows.

\begin{theorem}\label{Th1}
Suppose that $u\in C^{1,1}_{loc}\cap L_{\alpha}$ and $v\in C^{1,1}_{loc}\cap L_{\beta}$ is a positive solution pair of \eqref{Big2} and
 \begin{equation}\label{uv-decay}
 \frac{C_{1}}{|x|^{\gamma}}\leq u(x)\leq \frac{C_{2}}{|x|^{\gamma}}   \mbox{ for } |x| \mbox{ sufficiently large},\, \,\,\, \underset{|x| \rightarrow \infty}{\lim} u^{p-2}(x) v(x) = a <\frac{C_{1}}{C_{2}p},
 \end{equation}
 where $C_{1}, C_{2}, a, \gamma $ are constants and $\gamma$ satisfies $\gamma >\frac{\beta}{p-1}$.\quad Then $u$ must be radially symmetric and monotone decreasing about some point in $\mathbb{R}^n$.
\end{theorem}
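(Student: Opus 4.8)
The plan is to run the direct method of moving planes on the system \eqref{Big2}, moving planes perpendicular to the $x_1$-axis. For $\lambda\in\mathbb{R}$ set $U_\lambda=u(x^\lambda)-u(x)$ and $V_\lambda=v(x^\lambda)-v(x)$ as above, and observe that on $\Sigma_\lambda$ these satisfy, by subtracting the equations at $x$ and $x^\lambda$,
\begin{equation*}
(-\Delta)^{\frac{\alpha}{2}}U_\lambda(x)+U_\lambda(x)=v(x^\lambda)u^{p-1}(x^\lambda)-v(x)u^{p-1}(x),\qquad (-\Delta)^{\frac{\beta}{2}}V_\lambda(x)=u^p(x^\lambda)-u^p(x).
\end{equation*}
The first step is to start the procedure: show that for $\lambda$ sufficiently negative, $U_\lambda(x)\ge0$ and $V_\lambda(x)\ge0$ for all $x\in\Sigma_\lambda$. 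This is exactly where the \emph{decay at infinity} lemma (the ``decay result at infinity'' advertised in the abstract) is used. Suppose $U_\lambda$ is negative somewhere in $\Sigma_\lambda$; since $u$ decays like $|x|^{-\gamma}$ and $U_\lambda\to0$ at infinity, $U_\lambda$ attains a negative minimum at some $x^0\in\Sigma_\lambda$, and similarly, if needed, $V_\lambda$ attains a negative minimum at some $\bar x$. At such a point the decay lemma forces $(-\Delta)^{\frac{\alpha}{2}}U_\lambda(x^0)\le -c\,|x^0|^{-\alpha}U_\lambda(x^0)$ (with $c>0$), while at the same time the right-hand side, after inserting the decay bounds \eqref{uv-decay} and the hypothesis $a<\frac{C_1}{C_2 p}$, is estimated so that it is dominated in absolute value by the $U_\lambda(x^0)$-term; the term $+U_\lambda(x^0)$ in the equation only helps (it has the same sign as $(-\Delta)^{\frac\alpha2}U_\lambda(x^0)$ at a negative minimum). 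A parallel argument handles $V_\lambda$ using $(-\Delta)^{\frac\beta2}$, the decay $u\sim|x|^{-\gamma}$ and the constraint $\gamma>\frac{\beta}{p-1}$ (which makes $|x|^{-\gamma p}$ decay faster than $|x|^{-\beta}|x|^{-\gamma}$). Combining these yields a contradiction for $|\lambda|$ large, so the plane can get started.

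The second step is the standard ``move the plane to the right'' argument. Define
\begin{equation*}
\lambda_0=\sup\{\lambda\in\mathbb{R}\mid U_\mu\ge0\text{ and }V_\mu\ge0\text{ in }\Sigma_\mu\text{ for all }\mu\le\lambda\}.
\end{equation*}
By Step 1, $\lambda_0>-\infty$. I claim $\lambda_0<+\infty$ is impossible unless $u$ is symmetric about $T_{\lambda_0}$; the argument is: if $U_{\lambda_0}\not\equiv0$ on $\Sigma_{\lambda_0}$, then by a strong-maximum-principle type argument for the nonlocal operator (using that $U_{\lambda_0}\ge0$, is anti-symmetric, and satisfies the above equation) one gets $U_{\lambda_0}>0$ and $V_{\lambda_0}>0$ in the interior of $\Sigma_{\lambda_0}$. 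Then one shows the plane can be pushed a little further: for $\lambda=\lambda_0+\varepsilon$ with $\varepsilon>0$ small, $U_\lambda,V_\lambda\ge0$ still holds on $\Sigma_\lambda$. Here one splits $\Sigma_\lambda$ into a bounded ``good'' part $K$ (where $U_{\lambda_0}$ is bounded below by a positive constant, hence by continuity $U_\lambda$ stays nonnegative for small $\varepsilon$) together with a thin strip $\Sigma_\lambda\setminus\Sigma_{\lambda_0-\delta}$ near the plane plus the far region; on the far region the decay lemma again applies, and on the thin strip one invokes the \emph{narrow region principle} stated in the paper. This contradicts the definition of $\lambda_0$, so in fact $U_{\lambda_0}\equiv0$, i.e.\ $u$ (and $v$) is symmetric about the plane $T_{\lambda_0}$.

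The third step is to rule out the case $\lambda_0=+\infty$: if the plane can be moved all the way to $+\infty$ with $U_\lambda\ge0$ throughout, then $u$ is monotone nondecreasing in $x_1$ on all of $\mathbb{R}^n$, which together with $u\to0$ at infinity (from \eqref{uv-decay}) forces $u\equiv0$, contradicting $u>0$. Hence $\lambda_0$ is finite and $u$ is symmetric about $T_{\lambda_0}$. Since the $x_1$-direction was arbitrary, repeating the argument in every direction and noting that the hyperplanes of symmetry all pass through a common point (standard argument: a positive function that decays at infinity and is symmetric about two parallel planes is periodic, hence constant, which is excluded; so for each direction there is exactly one symmetry plane, and these must intersect in one point), we conclude $u$ is radially symmetric and monotone decreasing about that point. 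The main obstacle I anticipate is Step 1 together with the delicate part of Step 2: making the nonlinear, nonlocal right-hand side $v(x^\lambda)u^{p-1}(x^\lambda)-v(x)u^{p-1}(x)$ cooperate with the sign of $U_\lambda$. One must rewrite it using the mean value theorem as a sum of terms each carrying a factor of $U_\lambda$ or $V_\lambda$ at the minimum point, and then see that the coefficients — controlled precisely by the decay exponents and by the strict inequality $a<\frac{C_1}{C_2 p}$ — are small enough to be absorbed by the $|x|^{-\alpha}$ (resp. $|x|^{-\beta}$) gain from the decay lemma; coupling the $U$-estimate and the $V$-estimate correctly (since each equation's right side involves the other unknown) is the technical heart of the proof.
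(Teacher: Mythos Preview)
Your proposal is correct and follows essentially the same route as the paper's proof: the direct method of moving planes in two steps (start the plane via the decay-at-infinity lemma, Theorem~\ref{Th2}; push to the limiting position and close with the narrow region principle, Theorem~\ref{Th3}), with the mean-value-theorem rewriting of the nonlinearities and the precise use of the hypotheses $a<C_1/(C_2p)$ and $\gamma>\beta/(p-1)$ exactly as in the paper. One small slip: at a negative minimum the estimate should read $(-\Delta)^{\alpha/2}U_\lambda(x^0)\le c\,|x^0|^{-\alpha}U_\lambda(x^0)<0$ with $c>0$, not $-c$.
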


\begin{remark}
Since the Kelvin transform is not valid for the Schr$ \ddot{\textrm{o}}$dinger equations because of the presence of the term $u$, we need to impose the additional assumptions on the behavior of $u$ and $v$ at infinity.
\end{remark}

In order to prove Theorem \ref{Th1}\,, we need to obtain the
following two auxiliary conclusions, which are key ingredients
in the direct application of the method of moving planes.
\begin{theorem}\label{Th2}(\textbf{Decay at infinity})
Let $\Omega$ be an unbounded region in $\Sigma_{\lambda}$. Assume that $U_{\lambda}(x)\in L_{\alpha}\cap C^{1,1}_{loc}(\Omega)$ and $V_{\lambda}(x)\in L_{\beta}\cap C^{1,1}_{loc}(\Omega)$ are lower semi-continuous on $\overline{\Omega}$, which satisfy
\begin{equation}\label{decay-model}
\left\{\begin{array}{r@{\ \ }c@{\ \ }ll}
&&\left(-\Delta\right)^{\frac{\alpha}{2}}U_{\lambda}(x)+ C_{2}(x) U_{\lambda}(x)+C_{3}(x) V_{\lambda}(x)\geq0, & \ \ x\in\Omega\,, \\[0.05cm]
&&\left(-\Delta\right)^{\frac{\beta}{2}}V_{\lambda}(x)+C_{1}(x)U_{\lambda}(x)\geq 0, & \ \ x\in\Omega\,, \\[0.05cm]
&&U_{\lambda}(x) \geq 0,\,  V_{\lambda}(x)\geq0, & \ \ x\in\Sigma_{\lambda}\backslash\Omega\,, \\[0.05cm]
&&U_{\lambda}(x^{\lambda})=-U_{\lambda}(x),\,\,\,  V_{\lambda}(x^{\lambda})=-V_{\lambda}(x), & \ \ x\in\Sigma_{\lambda}\,,
\end{array}\right.
\end{equation}
with $C_{1}(x),\,C_{3}(x)<0$,\,\,\,\,$C_{2}(x)>0$,\,\,and
\begin{equation}\label{decay-C1(x)}
 C_{1}(x)\sim \frac{1}{|x|^{m}} \quad \mbox{for}\,  |x| \, \mbox{sufficiently large}, \quad \forall \,m > \beta,
\end{equation}
\begin{equation}
\label{decay-C3(x)}
\underset{|x| \rightarrow \infty}{\lim} C_{3}(x)=0.
\end{equation}
Then there exists a constant $R_{0} > 0$ such that if
\begin{equation}\label{decay-UV}
  U_{\lambda}(x^{0})=\min_{\Omega} U_{\lambda}(x)<0,\quad  V_{\lambda}(x^{1})=\min_{\Omega} V_{\lambda}(x)<0,
\end{equation}
then at least one of $x^{0}$ and $x^{1}$ satisfies
\begin{equation}\label{decay-result}
  |x|\leq R_{0}.
\end{equation}
\end{theorem}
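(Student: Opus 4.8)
The plan is to argue by contradiction at each of the minimum points $x^0$ and $x^1$, using the nonlocal integral representation of $(-\Delta)^{\alpha/2}$ and $(-\Delta)^{\beta/2}$ to derive a lower bound on the operator at a negative minimum, and then show that this lower bound forces the coefficients to be too large unless $|x^0|$ or $|x^1|$ is bounded. Suppose, for contradiction, that $|x^0| > R_0$ and $|x^1| > R_0$ for every candidate $R_0$; I will drive these to a contradiction by choosing $R_0$ large.

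First I would estimate $(-\Delta)^{\alpha/2}U_{\lambda}(x^0)$ from below. Since $U_{\lambda}(x^0) = \min_{\Omega}U_{\lambda} < 0$ and $U_{\lambda} \geq 0$ on $\Sigma_{\lambda}\setminus\Omega$, while by anti-symmetry $U_{\lambda}(y^{\lambda}) = -U_{\lambda}(y)$, I would split the defining integral over $\Sigma_{\lambda}$ and its reflection $\widetilde{\Sigma}_{\lambda}$. The standard computation (as in Chen--Li--Li) gives
\begin{equation*}
(-\Delta)^{\frac{\alpha}{2}}U_{\lambda}(x^0) \leq C_{n,\alpha}\, U_{\lambda}(x^0) \int_{\widetilde{\Sigma}_{\lambda}} \frac{1}{|x^0-y|^{n+\alpha}}\left(1 - \frac{|x^0-y|^{n+\alpha}}{|x^0-y^{\lambda}|^{n+\alpha}}\right) dy \leq \frac{C\, U_{\lambda}(x^0)}{|x^0 - x^{0,\lambda}|^{\alpha}} \leq \frac{C\, U_{\lambda}(x^0)}{|x^0|^{\alpha}},
\end{equation*}
where the last step uses that the distance to the plane is comparable to $|x^0|$ when $|x^0|$ is large. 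Combining with the first inequality of \eqref{decay-model}, namely $(-\Delta)^{\alpha/2}U_{\lambda}(x^0) \geq -C_2(x^0)U_{\lambda}(x^0) - C_3(x^0)V_{\lambda}(x^0)$, and recalling $U_{\lambda}(x^0) < 0$, $C_3 < 0$, $V_{\lambda}(x^1) \leq V_{\lambda}(x^0)$ in general but here I should be careful — I would use $V_{\lambda}(x^0) \geq V_{\lambda}(x^1) = \min V_{\lambda}$, so $-C_3(x^0)V_{\lambda}(x^0) \leq -C_3(x^0)V_{\lambda}(x^1)$ since $-C_3(x^0) > 0$ and this requires $V_{\lambda}(x^1) \leq V_{\lambda}(x^0)$, which holds. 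This yields a first key inequality relating $|x^0|^{-\alpha}$, $C_2(x^0)$, $C_3(x^0)$, and the ratio $V_{\lambda}(x^1)/U_{\lambda}(x^0)$.

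Next, in parallel, I would estimate $(-\Delta)^{\beta/2}V_{\lambda}(x^1)$ from below in the same fashion, getting $(-\Delta)^{\beta/2}V_{\lambda}(x^1) \leq C\,V_{\lambda}(x^1)/|x^1|^{\beta}$, and combine with the second inequality of \eqref{decay-model}: $(-\Delta)^{\beta/2}V_{\lambda}(x^1) \geq -C_1(x^1)U_{\lambda}(x^1) \geq -C_1(x^1)U_{\lambda}(x^0)$ (using $-C_1 > 0$ and $U_{\lambda}(x^0) \leq U_{\lambda}(x^1)$). This gives the second key inequality $C\,V_{\lambda}(x^1)/|x^1|^{\beta} \geq -C_1(x^1)U_{\lambda}(x^0)$, i.e. $|C_1(x^1)| \leq \dfrac{C}{|x^1|^{\beta}} \cdot \dfrac{-V_{\lambda}(x^1)}{-U_{\lambda}(x^0)}$. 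Now I would multiply the two key inequalities together so that the ratio $\dfrac{-V_{\lambda}(x^1)}{-U_{\lambda}(x^0)}$ and its reciprocal cancel. After cancellation one obtains, roughly,
\begin{equation*}
\left(\frac{C}{|x^0|^{\alpha}} - C_2(x^0)\right)\bigl(-|C_3(x^0)|\bigr)^{-1}\cdots
\end{equation*}
— more carefully, I would isolate from the first inequality that $\dfrac{-V_{\lambda}(x^1)}{-U_{\lambda}(x^0)} \geq \dfrac{1}{|C_3(x^0)|}\left(C_2(x^0) - \dfrac{C}{|x^0|^{\alpha}}\right)$ and from the second that $\dfrac{-V_{\lambda}(x^1)}{-U_{\lambda}(x^0)} \leq \dfrac{C}{|x^1|^{\beta}|C_1(x^1)|}$, then chain these to get
\begin{equation*}
\frac{1}{|C_3(x^0)|}\left(C_2(x^0) - \frac{C}{|x^0|^{\alpha}}\right) \leq \frac{C}{|x^1|^{\beta}\,|C_1(x^1)|} \sim \frac{C}{|x^1|^{\beta}}\cdot |x^1|^{m} = C\,|x^1|^{m-\beta}
\end{equation*}
by the asymptotic \eqref{decay-C1(x)}. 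If $|x^0|$ is large then $C_2(x^0) - C/|x^0|^{\alpha}$ is bounded below by a positive constant (here I'd want $C_2$ bounded away from $0$; if not, one uses the precise behavior of $C_2$ coming from the application, where $C_2(x) = 1 - (p-1)u^{p-2}v$ tends to $1 - (p-1)a > 0$). Meanwhile $|C_3(x^0)| \to 0$ by \eqref{decay-C3(x)}, so the left side blows up; but the right side $C|x^1|^{m-\beta}$ — wait, this grows too. So the clean contradiction must come by choosing $m$ close to $\beta$: since \eqref{decay-C1(x)} holds for \emph{all} $m > \beta$, I would pick $m = \beta + \varepsilon$ with $\varepsilon$ small, and then argue that the left-hand side, which behaves like $1/|C_3(x^0)| \to \infty$ faster than any polynomial in the relevant regime (or rather, force a genuine contradiction by a more careful bookkeeping of which of $|x^0|, |x^1|$ is large).

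\textbf{The main obstacle} I anticipate is exactly this last bookkeeping step: the two minima $x^0$ and $x^1$ need not have comparable moduli, so the estimates for $U_{\lambda}$ are anchored at $x^0$ while those for $V_{\lambda}$ are anchored at $x^1$, and the cross-terms mix them. The resolution is the ``at least one of'' structure of the conclusion: I would assume \emph{both} $|x^0| > R_0$ and $|x^1| > R_0$, so that \emph{simultaneously} $|C_3(x^0)|$ is tiny, $|C_1(x^1)| \sim |x^1|^{-m}$ is tiny (and since $m$ can be taken $> \beta$ arbitrarily, $|x^1|^{\beta}|C_1(x^1)| \sim |x^1|^{\beta - m} \to 0$), and $C_2(x^0)$ is close to its positive limit. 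Then the chained inequality reads (positive constant) $\leq |C_3(x^0)| \cdot \dfrac{C}{|x^1|^{\beta}|C_1(x^1)|}$; the right side is a product of something going to $0$ with something — here taking $m$ just slightly above $\beta$ keeps $|x^1|^{\beta}|C_1(x^1)| \sim |x^1|^{\beta-m}$ from decaying too fast, while $|C_3(x^0)| \to 0$ wins, giving (positive constant) $\leq o(1)$, the desired contradiction, provided $R_0$ is chosen large enough. I would also need to verify at the outset that $x^0, x^1 \in \Omega$ (so that the differential inequalities apply there) and that the anti-symmetry correctly yields the sign of the reflected integral contribution; these are routine but must be stated.
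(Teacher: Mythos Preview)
Your overall strategy matches the paper's: argue by contradiction, bound the fractional Laplacians at the negative minima via the anti-symmetric splitting of the integral, feed these bounds back into the differential inequalities, and force a contradiction when both $|x^{0}|$ and $|x^{1}|$ are large. The paper does exactly this, arriving at
\[
0 \;\leq\; \frac{C}{|x^{0}|^{\alpha}}U_{\lambda}(x^{0})+C_{2}(x^{0})U_{\lambda}(x^{0})-C\,C_{3}(x^{0})C_{1}(x^{1})\,|x^{1}|^{\beta}\,U_{\lambda}(x^{1})\;<\;0,
\]
and observes that the third term is harmless because $|C_{1}(x^{1})|\,|x^{1}|^{\beta}\sim |x^{1}|^{\beta-m}\to 0$ and $|C_{3}(x^{0})|\to 0$.

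However, your derivation has a concrete algebraic error that creates all of the ``bookkeeping'' difficulty you describe. From $(-\Delta)^{\beta/2}V_{\lambda}(x^{1})\leq \dfrac{C}{|x^{1}|^{\beta}}V_{\lambda}(x^{1})$ and the second line of \eqref{decay-model} you get
\[
\frac{C}{|x^{1}|^{\beta}}\,V_{\lambda}(x^{1})\;\geq\;-C_{1}(x^{1})\,U_{\lambda}(x^{1})\;\geq\;|C_{1}(x^{1})|\,U_{\lambda}(x^{0}),
\]
and taking absolute values (both sides are negative) yields
\[
\frac{-V_{\lambda}(x^{1})}{-U_{\lambda}(x^{0})}\;\leq\;\frac{|x^{1}|^{\beta}\,|C_{1}(x^{1})|}{C}\;\sim\;C\,|x^{1}|^{\beta-m}\;\longrightarrow\;0,
\]
\emph{not} $\leq C/(|x^{1}|^{\beta}|C_{1}(x^{1})|)\sim C\,|x^{1}|^{m-\beta}$ as you wrote. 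With the inequality in the correct direction the contradiction is immediate: your lower bound from the $U$-equation gives $\dfrac{-V_{\lambda}(x^{1})}{-U_{\lambda}(x^{0})}\geq \dfrac{C_{2}(x^{0})+C|x^{0}|^{-\alpha}}{|C_{3}(x^{0})|}\to\infty$, while the upper bound just obtained tends to $0$. There is no need to ``pick $m$ close to $\beta$'' or to worry about competing rates.

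A secondary point: your justification ``the distance to the plane is comparable to $|x^{0}|$'' for the bound $(-\Delta)^{\alpha/2}U_{\lambda}(x^{0})\leq C\,U_{\lambda}(x^{0})/|x^{0}|^{\alpha}$ is not correct in general, since $x^{0}$ may lie arbitrarily close to $T_{\lambda}$ with $|x^{0}|$ large. The paper instead observes that the ball $B_{|x^{0}|}\bigl((3|x^{0}|+x^{0}_{1},(x^{0})')\bigr)$ lies in $\widetilde{\Sigma}_{\lambda}$ and integrates the kernel over it, which gives the $|x^{0}|^{-\alpha}$ factor directly. Also, in your first key inequality the term should be $C_{2}(x^{0})+\dfrac{C}{|x^{0}|^{\alpha}}$, not $C_{2}(x^{0})-\dfrac{C}{|x^{0}|^{\alpha}}$; this sign slip is harmless once the main inversion above is fixed.
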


\begin{theorem}\label{Th3}(\textbf{Narrow region principle})
Let $\Omega$ be a bounded narrow region in $\Sigma_{\lambda}$, such that it is contained in $\left\{x\mid \lambda-\delta<x_{1}<\lambda\right\}$ with a small $\delta$. Assume that $u\in L_{\alpha}\cap C^{1,1}_{loc}(\Omega)$ and $v\in L_{\beta}\cap C^{1,1}_{loc}(\Omega)$ are lower semi-continuous on $\overline{\Omega}$, which satisfy
\begin{equation}\label{NRP-model}
\left\{\begin{array}{r@{\ \ }c@{\ \ }ll}
&&\left(-\Delta\right)^{\frac{\alpha}{2}}U_{\lambda}(x) + C_{2}(x)U_{\lambda}(x)+C_{3}(x)V_{\lambda}(x)\geq0, & \ \ x\in\Omega\,, \\[0.05cm]
&&\left(-\Delta\right)^{\frac{\beta}{2}}V_{\lambda}(x)+C_{1}(x)U_{\lambda}(x)\geq 0, & \ \ x\in\Omega\,, \\[0.05cm]
&&U_{\lambda}(x) \geq 0,\,  V_{\lambda}(x)\geq0, & \ \ x\in\Sigma_{\lambda}\backslash\Omega\,, \\[0.05cm]
&&U_{\lambda}(x^{\lambda})=-U_{\lambda}(x),\,  V_{\lambda}(x^{\lambda})=-V_{\lambda}(x), & \ \ x\in\Sigma_{\lambda}\,,
\end{array}\right.
\end{equation}
where $C_{1}(x)$, $C_{2}(x)$ and $C_{3}(x)$ are bounded from below
in $\Omega$ and $C_{1}(x),\,C_{3}(x)<0$. Then for sufficiently small $\delta$, we have
\begin{equation}\label{NRP-r1}
  U_{\lambda}(x),\,V_{\lambda}(x)\geq0,\quad x\in\Omega.
\end{equation}
Moreover, if\, $U_{\lambda}(x)=0$ or $V_{\lambda}(x)=0$ at some point $x$ in $\Omega$, then
\begin{equation}\label{NRP-r2}
  U_{\lambda}(x)=V_{\lambda}(x)\equiv0 \quad \mbox{almost everywhere in} \
  \mathbb{R}^{n}.
\end{equation}
The above conclusions hold for an unbounded narrow region $\Omega$ if we further assume that
\begin{equation*}
\underset{|x| \rightarrow \infty}{\underline{\lim}} U_{\lambda}(x),\,V_{\lambda}(x)\geq0.
\end{equation*}
\end{theorem}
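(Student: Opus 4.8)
The plan is to prove Theorem~\ref{Th3} by a contradiction argument based at the negative minima of $U_\lambda$ and $V_\lambda$, mirroring the strategy already set up for Theorem~\ref{Th2} but exploiting the smallness of $\delta$ in place of the decay in $|x|$. First I would suppose, for contradiction, that $U_\lambda$ is negative somewhere in $\Omega$ (the case of $V_\lambda$ being handled symmetrically). Since $U_\lambda$ is lower semi-continuous on $\overline{\Omega}$ and, by the boundary/limit hypotheses, is nonnegative on $\Sigma_\lambda\setminus\Omega$ and has nonnegative lower limit at infinity, the infimum of $U_\lambda$ over $\overline{\Sigma_\lambda}$ is attained at some point $x^0\in\Omega$ with $U_\lambda(x^0)<0$; similarly $V_\lambda$ attains a negative minimum at some $x^1\in\Omega$, if it is negative anywhere.

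Next I would estimate $(-\Delta)^{\alpha/2}U_\lambda(x^0)$ from above. Using the anti-symmetry $U_\lambda(x^\lambda)=-U_\lambda(x)$, one splits the defining integral over $\Sigma_\lambda$ and $\widetilde\Sigma_\lambda$ and folds the second piece back via reflection, obtaining the standard representation
\begin{equation*}
(-\Delta)^{\frac{\alpha}{2}}U_\lambda(x^0)=C_{n,\alpha}\,\mathrm{PV}\!\int_{\Sigma_\lambda}\!\Big(\frac{1}{|x^0-y|^{n+\alpha}}-\frac{1}{|x^0-y^\lambda|^{n+\alpha}}\Big)\big(U_\lambda(x^0)-U_\lambda(y)\big)\,dy\;\le\; C_{n,\alpha}\,U_\lambda(x^0)\!\int_{\Sigma_\lambda}\!\frac{dy}{|x^0-y^\lambda|^{n+\alpha}},
\end{equation*}
since $U_\lambda(x^0)-U_\lambda(y)\le 0$ for $y\in\Sigma_\lambda$ and the kernel difference is positive. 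The remaining integral is comparable to $d^{-\alpha}$ where $d=\mathrm{dist}(x^0,T_\lambda)\le\delta$, so $(-\Delta)^{\alpha/2}U_\lambda(x^0)\le -C\,\delta^{-\alpha}\,|U_\lambda(x^0)|$ for some $C>0$. Feeding this into the first inequality of \eqref{NRP-model} at $x^0$ and using that $C_2$ is bounded below, say $C_2(x)\ge -A$, gives
\begin{equation*}
0\le (-\Delta)^{\frac{\alpha}{2}}U_\lambda(x^0)+C_2(x^0)U_\lambda(x^0)+C_3(x^0)V_\lambda(x^0)\le\big(-C\delta^{-\alpha}+A\big)U_\lambda(x^0)+C_3(x^0)V_\lambda(x^0).
\end{equation*}
Because $U_\lambda(x^0)<0$ and, by minimality, $U_\lambda(x^0)\le V_\lambda(x^1)\le V_\lambda(x^0)$ while $C_3(x^0)<0$, the term $C_3(x^0)V_\lambda(x^0)$ can be controlled: writing $V_\lambda(x^0)=U_\lambda(x^0)+(V_\lambda(x^0)-U_\lambda(x^0))$ with the bracket $\ge 0$, and using that $C_3$ is bounded below, the cross term is bounded by a constant times $|U_\lambda(x^0)|$ plus a sign-favorable piece; an entirely parallel estimate at $x^1$ using the second line of \eqref{NRP-model}, $(-\Delta)^{\beta/2}V_\lambda(x^1)\le -C'\delta^{-\beta}|V_\lambda(x^1)|$, and $C_1(x^1)<0$ bounded below, couples the two. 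Adding the two resulting inequalities and choosing $\delta$ small enough that $C\delta^{-\alpha}$ and $C'\delta^{-\beta}$ dominate all the bounded coefficients yields $0<0$, a contradiction; hence \eqref{NRP-r1} holds.

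For the strong-maximum-principle conclusion \eqref{NRP-r2}, I would argue that if $U_\lambda(\bar x)=0$ at some $\bar x\in\Omega$ then $\bar x$ is a minimum point of the now-nonnegative $U_\lambda$ over $\mathbb{R}^n$, so $(-\Delta)^{\alpha/2}U_\lambda(\bar x)=C_{n,\alpha}\,\mathrm{PV}\int_{\Sigma_\lambda}(\cdots)(-U_\lambda(y))\,dy\le 0$ with equality only if $U_\lambda\equiv 0$ a.e. in $\Sigma_\lambda$ (the kernel being strictly positive); but the differential inequality at $\bar x$ forces $(-\Delta)^{\alpha/2}U_\lambda(\bar x)\ge -C_3(\bar x)V_\lambda(\bar x)\ge 0$, giving $(-\Delta)^{\alpha/2}U_\lambda(\bar x)=0$ and hence $U_\lambda\equiv 0$ a.e.; then the second equation forces $(-\Delta)^{\beta/2}V_\lambda\equiv$ something of one sign and the same rigidity argument gives $V_\lambda\equiv0$ a.e. The case $V_\lambda(\bar x)=0$ is symmetric, with the coupling through $C_1$ propagating the vanishing to $U_\lambda$. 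Finally, the unbounded-$\Omega$ case requires no change beyond invoking the hypothesis $\underline{\lim}_{|x|\to\infty}U_\lambda,V_\lambda\ge0$ to guarantee the negative minima, if they exist, are attained at interior points rather than escaping to infinity. The main obstacle I anticipate is bookkeeping the coupled cross-terms $C_3 V_\lambda$ and $C_1 U_\lambda$ so that the single smallness parameter $\delta$ simultaneously beats \emph{both} bounded-below coefficient families; the clean way is to add the two minimum-point inequalities and track that the ``bad'' cross contributions are each $O(1)\cdot(|U_\lambda(x^0)|+|V_\lambda(x^1)|)$ while the ``good'' fractional-Laplacian contributions are $\gtrsim \delta^{-\min(\alpha,\beta)}(|U_\lambda(x^0)|+|V_\lambda(x^1)|)$, which is exactly where $C_1(x),C_3(x)<0$ (not merely bounded) is used to avoid a wrong sign.
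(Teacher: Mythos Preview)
Your overall architecture matches the paper's proof: argue by contradiction at the negative minima, exploit the anti-symmetry to bound the fractional Laplacian at a minimum by $C\delta^{-\alpha}$ (respectively $C\delta^{-\beta}$) times the minimum value, then let the blow-up in $\delta$ beat the bounded coefficients. The strong-maximum-principle part \eqref{NRP-r2} is also essentially the paper's argument.

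There is, however, a genuine gap in your handling of the cross-term $C_3(x^0)V_\lambda(x^0)$. You write ``by minimality, $U_\lambda(x^0)\le V_\lambda(x^1)\le V_\lambda(x^0)$'' and then decompose $V_\lambda(x^0)=U_\lambda(x^0)+(V_\lambda(x^0)-U_\lambda(x^0))$ with ``the bracket $\ge 0$''. But $U_\lambda(x^0)\le V_\lambda(x^1)$ compares the minima of two \emph{different} functions and has no justification; nothing in the hypotheses orders $U_\lambda$ against $V_\lambda$. The correct bookkeeping uses only the within-function minimality: since $x^1$ minimises $V_\lambda$ one has $V_\lambda(x^0)\ge V_\lambda(x^1)$, hence $C_3(x^0)V_\lambda(x^0)\le C_3(x^0)V_\lambda(x^1)$ because $C_3<0$; and since $x^0$ minimises $U_\lambda$ one has $U_\lambda(x^1)\ge U_\lambda(x^0)$, hence $C_1(x^1)U_\lambda(x^1)\le C_1(x^1)U_\lambda(x^0)$. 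With these two replacements your ``add the two minimum-point inequalities'' strategy goes through cleanly and gives
\[
0\;\le\;\bigl(C\delta^{-\alpha}+C_2(x^0)+C_1(x^1)\bigr)U_\lambda(x^0)\;+\;\bigl(C'\delta^{-\beta}+C_3(x^0)\bigr)V_\lambda(x^1)\;<\;0
\]
for small $\delta$, exactly the contradiction you sketch in your final paragraph. (The paper obtains the same contradiction by \emph{substitution} rather than addition: from the $V$-inequality at its minimum it extracts $V_\lambda(\overline{x})\ge -C_1(\overline{x})\,\delta^{\beta}\,U_\lambda(\overline{x})/C$, inserts this into the $U$-inequality, and factors out $U_\lambda(\widetilde{x})$.)

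Two smaller points. First, you should state explicitly why $V_\lambda$ must be negative somewhere once $U_\lambda$ is: if $V_\lambda\ge 0$ throughout, then $C_3(x^0)V_\lambda(x^0)\le 0$ and the first inequality at $x^0$ already gives $0\le(C\delta^{-\alpha}+C_2(x^0))U_\lambda(x^0)<0$, a contradiction; the paper handles this by starting from a negative minimum of $V_\lambda$ and using the second line of \eqref{NRP-model} to force $U_\lambda<0$. Second, the assumption ``$C_1,C_3$ bounded from below'' together with $C_1,C_3<0$ means $|C_1|,|C_3|$ are bounded, which is precisely what makes the cross-terms $O(1)\cdot(|U_\lambda(x^0)|+|V_\lambda(x^1)|)$; you use this implicitly but it is worth saying.
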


The remainder of this paper is organized as follows. The proof of Theorem \ref{Th2} and Theorem \ref{Th3} is given in section \ref{section2}. Then we prove Theorem \ref{Th1} in the last section.

\section{Decay at Infinity and Narrow Region Principle}\label{section2}

In this section, we give the proof of Theorem \ref{Th2} and Theorem
\ref{Th3} which are essential in the subsequent section to derive Theorem \ref{Th1}\,.

\begin{proof}[Proof of Theorem \ref{Th2}\,](\textbf{Decay at Infinity})
We argue by contradiction, if \eqref{decay-result} is violated, then from the definition of the fractional Laplacian \eqref{Frac-Laplacian} and a direct calculation
\begin{eqnarray}\label{d-1}
% \nonumber to remove numbering (before each equation)
  (-\Delta)^{\frac{\beta}{2}}V_{\lambda}(x^{1}) &=& C_{n,\beta} PV \int_{\mathbb{R}^{n}}\frac{V_{\lambda}(x^{1})-V_{\lambda}(y)}{|x^{1}-y|^{n+\beta}}dy \nonumber\\
   &=&   C_{n,\beta} PV \int_{\Sigma_{\lambda}}\frac{V_{\lambda}(x^{1})-V_{\lambda}(y)}{|x^{1}-y|^{n+\beta}}dy+C_{n,\beta} PV \int_{\widetilde{\Sigma}_{\lambda}}\frac{V_{\lambda}(x^{1})-V_{\lambda}(y)}{|x^{1}-y|^{n+\beta}}dy\nonumber\\
   &=&  C_{n,\beta} PV \int_{\Sigma_{\lambda}}\frac{V_{\lambda}(x^{1})-V_{\lambda}(y)}{|x^{1}-y|^{n+\beta}}dy+ C_{n,\beta} PV \int_{\Sigma_{\lambda}}\frac{V_{\lambda}(x^{1})-V_{\lambda}(y^{\lambda})}{|x^{1}-y^{\lambda}|^{n+\beta}}dy\nonumber\\
   &=&  C_{n,\beta} PV
   \int_{\Sigma_{\lambda}}\frac{V_{\lambda}(x^{1})-V_{\lambda}(y)}{|x^{1}-y|^{n+\beta}}dy+ C_{n,\beta} PV \int_{\Sigma_{\lambda}}\frac{V_{\lambda}(x^{1})+V_{\lambda}(y)}{|x^{1}-y^{\lambda}|^{n+\beta}}dy\nonumber\\
  &\leq & C_{n,\beta}
  \int_{\Sigma_{\lambda}}\frac{V_{\lambda}(x^{1})-V_{\lambda}(y)}{|x^{1}-y^{\lambda}|^{n+\beta}}+\frac{V_{\lambda}(x^{1})+V_{\lambda}(y)}{|x^{1}-y^{\lambda}|^{n+\beta}}dy\nonumber  \\
   &=&  C_{n,\beta}  \int_{\Sigma_{\lambda}}\frac{2V_{\lambda}(x^{1})}{|x^{1}-y^{\lambda}|^{n+\beta}}dy,
\end{eqnarray}
where $\widetilde{\Sigma}_{\lambda}=\mathbb{R}^{n}\backslash\Sigma_{\lambda}$.\\
Now we estimate the integral term in \eqref{d-1} for fixed $\lambda$ and $x^{1}\in\Sigma_{\lambda}$ with $|x^{1}|$ sufficiently large. Let $x^{2}=\left(3|x^{1}|+x_{1}^{1},(x^{1})'\right)$, then $B_{|x^{1}|}(x^{2})\subset\widetilde{\Sigma}_{\lambda}$. Then we have
\begin{eqnarray}\label{d-2}
% \nonumber to remove numbering (before each equation)
   \int_{\Sigma_{\lambda}}\frac{1}{|x^{1}-y^{\lambda}|^{n+\beta}}dy &=& \int_{\widetilde{\Sigma}_{\lambda}}\frac{1}{|x^{1}-y|^{n+\beta}}dy \nonumber\\
   &\geq& \int_{B_{|x^{1}|}(x^{2})}\frac{1}{|x^{1}-y|^{n+\beta}}dy \nonumber\\
  &\geq& \int_{B_{|x^{1}|}(x^{2})}\frac{1}{4^{n+\beta}|x^{1}|^{n+\beta}}dy \nonumber\\
  &=&\frac{\omega_{n}}{4^{n+\beta}|x^{1}|^{\beta}}.
\end{eqnarray}
Then inserting \eqref{d-2} into \eqref{d-1}\,, we obtain
\begin{equation}\label{d-3}
  (-\Delta)^{\frac{\beta}{2}}V_{\lambda}(x^{1})\leq\frac{C}{|x^{1}|^{\beta}}V_{\lambda}(x^{1})<0.
\end{equation}
Combining \eqref{d-3} with \eqref{decay-model} and $C_{1}(x)<0$, we derive
\begin{equation}\label{d-4}
  U_{\lambda}(x^{1})<0,
\end{equation}
and
\begin{equation}\label{d-5}
  V_{\lambda}(x^{1})\geq-CC_{1}(x^{1})|x^{1}|^{\beta}U_{\lambda}(x^{1}).
\end{equation}
In terms of \eqref{d-4},\eqref{uv-decay} and the lower semi-continuity of $U_{\lambda}$ on $\overline{\Omega}$, we can show that there exists $x^{0}\in\Omega$ such that
\begin{equation*}
   U_{\lambda}(x^{0})=\min_{\Sigma_{\lambda}} U_{\lambda}(x)<0.
\end{equation*}
From a similar argument as in \eqref{d-1}\eqref{d-2}, we can show
\begin{equation}\label{d-6}
   (-\Delta)^{\frac{\alpha}{2}} U_{\lambda}(x^{0})\leq \frac{C}{|x^{0}|^{\alpha}}U_{\lambda}(x^{0})
\end{equation}
From \eqref{decay-model},\eqref{d-5} and \eqref{d-6}, we can deduce
\begin{eqnarray}
% \nonumber to remove numbering (before each equation)
  0&\leq&(-\Delta)^{\frac{\alpha}{2}}U_{\lambda}(x^{0})+C_{2}(x^{0})U_{\lambda}(x^{0})+C_{3}(x^{0})V_{\lambda}(x^{0})\nonumber\\
   &\leq& \frac{C}{|x^{0}|^{\alpha}}U_{\lambda}(x^{0})+C_{2}(x^{0})U_{\lambda}(x^{0})+C_{3}(x^{0})V_{\lambda}(x^{1})\nonumber\\
   &\leq&
   \frac{C}{|x^{0}|^{\alpha}}U_{\lambda}(x^{0})+C_{2}(x^{0})U_{\lambda}(x^{0})-CC_{3}(x^{0})C_{1}(x^{1})|x^{1}|^{\beta}U_{\lambda}(x^{1})
   \nonumber \\
   &<&\,\,0 \nonumber
\end{eqnarray}
for sufficiently large $|x^{0}|$ and $|x^{1}|$. The last inequality follows from assumptions \eqref{decay-C1(x)} and \eqref{decay-C3(x)}. This is a contradiction. Hence, the relation \eqref{decay-result} must be valid for at least one of $x^{0}$ and $x^{1}$. The proof of Theorem \ref{Th2} is completed.
\end{proof}

Now we start the proof of Theorem \ref{Th3} as follows.
\begin{proof}[Proof of Theorem \ref{Th3}\,](\textbf{Narrow Region Principle}) The proof goes by contradiction. Without loss of generality, we assume that there exists a $\overline{x}\in \overline{\Omega}$ such that
$$V_{\lambda}(\overline{x})=\min_{\overline{\Omega}} V_{\lambda}(x)<0.$$
By the definition of $(-\Delta)^{\frac{\beta}{2}}$, we have
\begin{eqnarray*}
% \nonumber to remove numbering (before each equation)
  (-\Delta)^{\frac{\beta}{2}}V_{\lambda}(\overline{x}) &=& C_{n,\beta} PV \int_{\mathbb{R}^{n}}\frac{V_{\lambda}(\overline{x})-V_{\lambda}(y)}{|\overline{x}-y|^{n+\beta}}dy \nonumber\\
   &=&   C_{n,\beta} PV \int_{\Sigma_{\lambda}}\frac{V_{\lambda}(\overline{x})-V_{\lambda}(y)}{|\overline{x}-y|^{n+\beta}}dy+C_{n,\beta} PV \int_{\widetilde{\Sigma}_{\lambda}}\frac{V_{\lambda}(\overline{x})-V_{\lambda}(y)}{|\overline{x}-y|^{n+\beta}}dy\nonumber\\
   &=&  C_{n,\beta} PV \int_{\Sigma_{\lambda}}\frac{V_{\lambda}(\overline{x})-V_{\lambda}(y)}{|\overline{x}-y|^{n+\beta}}dy+ C_{n,\beta} PV \int_{\Sigma_{\lambda}}\frac{V_{\lambda}(\overline{x})-V_{\lambda}(y^{\lambda})}{|\overline{x}-y^{\lambda}|^{n+\beta}}dy\nonumber\\
   &=&  C_{n,\beta} PV
   \int_{\Sigma_{\lambda}}\frac{V_{\lambda}(\overline{x})-V_{\lambda}(y)}{|\overline{x}-y|^{n+\beta}}dy+ C_{n,\beta} PV \int_{\Sigma_{\lambda}}\frac{V_{\lambda}(\overline{x})+V_{\lambda}(y)}{|\overline{x}-y^{\lambda}|^{n+\beta}}dy\nonumber\\
  &\leq & C_{n,\beta}
  \int_{\Sigma_{\lambda}}\frac{V_{\lambda}(\overline{x})-V_{\lambda}(y)}{|\overline{x}-y^{\lambda}|^{n+\beta}}+\frac{V_{\lambda}(\overline{x})+V_{\lambda}(y)}{|\overline{x}-y^{\lambda}|^{n+\beta}}dy\nonumber  \\
   &=&  C_{n,\beta}  \int_{\Sigma_{\lambda}}\frac{2V_{\lambda}(\overline{x})}{|\overline{x}-y^{\lambda}|^{n+\beta}}dy,
\end{eqnarray*}
Let $D:=\left(B_{2\delta}(\overline{x})\backslash B_{\delta}(\overline{x})\right)\cap \widetilde{\sum}_{\lambda}$\,, then
\begin{eqnarray}\label{n-1}
% \nonumber to remove numbering (before each equation)
  \int_{\Sigma_{\lambda}}\frac{1}{|\overline{x}-y^{\lambda}|^{n+\beta}}dy &=& \int_{\widetilde{\Sigma}_{\lambda}}\frac{1}{|\overline{x}-y|^{n+\beta}}dy \nonumber\\
   &\geq& \int_{D}\frac{1}{|\overline{x}-y|^{n+\beta}}dy \nonumber \\
   &\geq& C\int_{B_{2\delta}(\overline{x})\backslash B_{\delta}(\overline{x})}\frac{1}{|\overline{x}-y|^{n+\beta}}dy \nonumber \\
   &\sim& \frac{C}{\delta^{\beta}}.
\end{eqnarray}
Thus,
\begin{equation}\label{n-2}
  (-\Delta)^{\frac{\beta}{2}}V_{\lambda}(\overline{x})\leq\frac{CV_{\lambda}(\overline{x})}{\delta^{\beta}}<0.
\end{equation}
Combining this with \eqref{NRP-model} and $C_{1}(x)<0$, we have
\begin{equation}\label{n-3}
U_{\lambda}(\overline{x})<0
\end{equation}
and
\begin{equation}\label{n-4}
-C_{1}(\overline{x})U_{\lambda}(\overline{x})\leq\frac{CV_{\lambda}(\overline{x})}{\delta^{\beta}}.
\end{equation}
From\eqref{n-3}, we know that there exists a $\widetilde{x}$ such that
\begin{equation*}
U_{\lambda}(\widetilde{x})= \min_{\Omega}U_{\lambda}(x)<0.
\end{equation*}
Similar to \eqref{n-2}, we can derive that
\begin{equation}\label{n-5}
(-\Delta)^{\frac{\alpha}{2}}U_{\lambda}(\widetilde{x})\leq\frac{CU_{\lambda}(\widetilde{x})}{\delta^{\alpha}}<0.
\end{equation}
By \eqref{NRP-model} and $C_{3}(x)<0$, for $\delta$ sufficiently small, we have
\begin{eqnarray*}
% \nonumber to remove numbering (before each equation)
  0&\leq&(-\Delta)^{\frac{\alpha}{2}}U_{\lambda}(\widetilde{x})+C_{2}(\widetilde{x})U_{\lambda}(\widetilde{x})+C_{3}(\widetilde{x})V_{\lambda}(\widetilde{x})\nonumber\\
   &\leq&\frac{CU_{\lambda}(\widetilde{x})}{\delta^{\alpha}}+C_{2}(\widetilde{x})U_{\lambda}(\widetilde{x})+C_{3}(\widetilde{x})V_{\lambda}(\widetilde{x})  \nonumber\\
   &\leq&\frac{CU_{\lambda}(\widetilde{x})}{\delta^{\alpha}}+C_{2}(\widetilde{x})U_{\lambda}(\widetilde{x})-C_{3}(\widetilde{x})C_{1}(\overline{x})\cdot\frac{1}{C}\delta^{\beta}U_{\lambda}(\overline{x}) \nonumber\\
   &\leq& \frac{CU_{\lambda}(\widetilde{x})}{\delta^{\alpha}}+C_{2}(\widetilde{x})U_{\lambda}(\widetilde{x})-C_{3}(\widetilde{x})C_{1}(\overline{x})\cdot\frac{1}{C}\delta^{\beta}U_{\lambda}(\widetilde{x}) \nonumber\\
   &\leq&
   \frac{CU_{\lambda}(\widetilde{x})}{\delta^{\alpha}}[1+\frac{C_{2}(\widetilde{x})\delta^{\alpha}}{C}-\frac{C_{3}(\widetilde{x})C_{1}(\overline{x})}{C^{2}}\delta^{\alpha}\delta^{\beta}]
   \nonumber\\
   &<&0
\end{eqnarray*}
which is a contradiction.
Therefore, \eqref{NRP-r1} is true.\\
To prove \eqref{NRP-r2}, we suppose there exists $x_{0}\in\Omega$ such that $\phi(x_{0})=0$.
\begin{eqnarray}\label{n-6}
% \nonumber to remove numbering (before each equation)
  \textmd{Then,} \quad (-\Delta)^{\frac{\beta}{2}}V_{\lambda}(x_{0}) &=& C_{n,\beta} PV \int_{\mathbb{R}^{n}}\frac{V_{\lambda}(x_{0})-V_{\lambda}(y)}{|x_{0}-y|^{n+\beta}}dy \nonumber\\
   &=&   C_{n,\beta} PV \int_{\Sigma_{\lambda}}\frac{-V_{\lambda}(y)}{|x_{0}-y|^{n+\beta}}dy+C_{n,\beta} PV \int_{\widetilde{\Sigma}_{\lambda}}\frac{-V_{\lambda}(y)}{|x_{0}-y|^{n+\beta}}dy\nonumber\\
   &=&  C_{n,\beta} PV \int_{\Sigma_{\lambda}}\frac{-V_{\lambda}(y)}{|x_{0}-y|^{n+\beta}}dy+ C_{n,\beta} PV \int_{\Sigma_{\lambda}}\frac{-V_{\lambda}(y^{\lambda})}{|x_{0}-y^{\lambda}|^{n+\beta}}dy\nonumber\\
   &=&  C_{n,\beta} PV
   \int_{\Sigma_{\lambda}}\frac{-V_{\lambda}(y)}{|x_{0}-y|^{n+\beta}}dy+ C_{n,\beta} PV \int_{\Sigma_{\lambda}}\frac{V_{\lambda}(y)}{|x_{0}-y^{\lambda}|^{n+\beta}}dy\nonumber\\
  &=& C_{n,\beta} PV
  \int_{\Sigma_{\lambda}}(\frac{1}{|x_{0}-y^{\lambda}|^{n+\beta}}-\frac{1}{|x_{0}-y|^{n+\beta}})\,V_{\lambda}(y)dy.
\end{eqnarray}
From $\frac{1}{|x_{0}-y^{\lambda}|^{n+\beta}}-\frac{1}{|x_{0}-y|^{n+\beta}}<0$, \,we know that if\, $V_{\lambda}(x) \neq 0 \,\,in\,\, \Sigma_{\lambda}$, \,\eqref{n-6} implies that
\begin{equation*}
(-\Delta)^{\frac{\beta}{2}}V_{\lambda}(x_{0})<0.
\end{equation*}
Combining this with \eqref{NRP-model} and $C_{1}(x) < 0$, we can derive
\begin{equation*}
U_{\lambda}(x_{0})<0,
\end{equation*}
which is a contradiction with \eqref{NRP-r1}.\\
Therefore $V_{\lambda}(x_{0})$ is identically 0 in $\Sigma_{\lambda}$. Since
\begin{equation*}
V_{\lambda}(x^{\lambda})=-V_{\lambda}(x),\,\,x\in \Sigma_{\lambda},
\end{equation*}
it shows that
\begin{equation*}
V_{\lambda}(x)=0,\,\,x\in \mathbb{R}^{n},
\end{equation*}
then
\begin{equation*}
(-\Delta)^{\frac{\beta}{2}}V_{\lambda}(x)=0,\,\,x\in \mathbb{R}^{n}.
\end{equation*}
From \eqref{NRP-model} and $C_{1}(x)<0$, we know
\begin{equation*}
U_{\lambda}(x)\leq 0, \,\,\,x\in \Sigma_{\lambda}.
\end{equation*}
We already proved
\begin{equation*}
U_{\lambda}(x)\geq 0, \,\,\,x\in \Sigma_{\lambda}.
\end{equation*}
Consequently it must be true that
\begin{equation*}
U_{\lambda}(x)= 0, \,\,\,x\in \Sigma_{\lambda}.
\end{equation*}
Combining this with the fact
\begin{equation*}
U_{\lambda}(x^{\lambda})= -U_{\lambda}(x), \,\,\,x\in \Sigma_{\lambda},
\end{equation*}
we get
\begin{equation*}
U_{\lambda}(x)\equiv 0, \,\,\,x\in \mathbb{R}^{n}.
\end{equation*}
From a similar argument, we can show if $U_{\lambda}(x)$ is 0 at one point in $\Sigma_{\lambda}$, then $V_{\lambda}(x)$ and $U_{\lambda}(x)$ are identically 0 in $\mathbb{R}^{n}$.\\
For the unbounded narrow region $\Omega$, the condition
\begin{equation*}
\underset{|x| \rightarrow \infty}{\underline{\lim}} U_{\lambda}(x),\,V_{\lambda}(x)\geq0
\end{equation*}
guarantees that the negative minimum of $U_{\lambda}(x)$ and $V_{\lambda}(x)$ must be attained at some point $x^{0}$ and $x^{1}$ respectively, then we can derive the similar contradiction as above.

This completes the proof of Theorem \ref{Th3} .
\end{proof}

\section{Radial Symmetry of Positive Solutions}\label{section3}
In this section, we prove the radial symmetry of positive solutions to \eqref{Big2}.

\begin{proof}[Proof of Theorem \ref{Th1}]Choosing a direction to be $x_{1}$-direction, then we divide the proof into two steps.

\noindent \textup{\textbf{Step 1.}} Start moving the plane $T_{\lambda}$ from $-\infty$ to the right along the $x_{1}$-axis. We will claim that
\begin{equation}\label{r-1}
  U_{\lambda}(x),\, V_{\lambda}(x)\geq0, \quad \forall x\in\Sigma_{\lambda}
\end{equation}
for sufficiently negative $\lambda$.

If \eqref{r-1} is violated, without loss of generality, we assume that there exists an $\overline{x} \in \Sigma_\lambda$ such that
$$ U_{\lambda}(\overline{x}) = \min_{\Sigma_\lambda} U_\lambda(x) < 0 .$$
Similar estimates as \eqref{d-1}-\eqref{d-3} in the proof of Theorem \ref{Th2} yield that
\begin{equation}\label{r-2}
  (-\Delta)^{\frac{\alpha}{2}}U_{\lambda}(\overline{x})\leq C \,\frac{U_{\lambda}(\overline{x})}{|\overline{x}|^{\alpha}}<0.
\end{equation}
Now we show that
\begin{equation}\label{r-3}
V_{\lambda}(\overline{x})<0.
\end{equation}
Indeed, if not, $V_{\lambda}(\overline{x})\geq0.$
\,\,Then by \eqref{uv-decay} and mean value principle, \begin{eqnarray}\label{r-4}
% \nonumber to remove numbering (before each equation)
  (-\Delta)^{\frac{\alpha}{2}}U_{\lambda}(\overline{x})&=&
  (-\Delta)^{\frac{\alpha}{2}}u(\overline{x}^{\lambda})-(-\Delta)^{\frac{\alpha}{2}}u(\overline{x})\nonumber\\ &=& v(\overline{x}^{\lambda})u^{p-1}(\overline{x}^{\lambda})-u(\overline{x}^{\lambda})-v(\overline{x})u^{p-1}(\overline{x}) + u(\overline{x}) \nonumber\\
  &=&
  v(\overline{x}^{\lambda})u^{p-1}(\overline{x}^{\lambda})-v(\overline{x})u^{p-1}(\overline{x}^{\lambda})+v(\overline{x})u^{p-1}(\overline{x}^{\lambda})-v(\overline{x})u^{p-1}(\overline{x})-U_{\lambda}(\overline{x})
    \nonumber\\
  &=&
  u^{p-1}(\overline{x}^{\lambda})V_{\lambda}(\overline{x})+v(\overline{x})\left( u^{p-1}(\overline{x}^{\lambda})-u^{p-1}(\overline{x})\right)-U_{\lambda}(\overline{x})
   \nonumber\\
   &\geq&
   u^{p-1}(\overline{x}^{\lambda})V_{\lambda}(\overline{x})+ v(\overline{x})\frac{1}{C_{1}}|\overline{x}|^{\gamma}u(\overline{x}) \left( u^{p-1}(\overline{x}^{\lambda})-u^{p-1}(\overline{x})\right) -U_{\lambda}(\overline{x}) \nonumber \\
   &\geq& u^{p-1}(\overline{x}^{\lambda})\,V_{\lambda}(\overline{x})+ \frac{1}{C_{1}} v(\overline{x})|\overline{x}| ^{\gamma}\left(u^{p}( \overline{x}^{\lambda})-u^{p}(\overline{x}))-U_{\lambda}(\overline{x}\right)        \nonumber \\
   &=& u^{p-1}(\overline{x}^{\lambda})\,V_{\lambda}(\overline{x})+\frac{1}{C_{1}}v(\overline{x})\,|\overline{x}|^{\gamma}\,p\,\xi^{p-1}\left( u(\overline{x}^{\lambda})-u(\overline{x})\right)-U_{\lambda}(\overline{x}) \nonumber \\
   &\geq& u^{p-1}(\overline{x}^{\lambda})\,V_{\lambda}(\overline{x})+ \frac{1}{C_{1}}v(\overline{x})|\overline{x}|^{\gamma}\,p\,u^{p-1}(\overline{x})\,U_{\lambda}(\overline{x}) -U_{\lambda}(\overline{x}) \nonumber \\
   &=& u^{p-1}(\overline{x}^{\lambda})\,V_{\lambda}(\overline{x})+\left(\frac{1}{C_{1}}\,p\,u^{p-1}(\overline{x})v(\overline{x})|\overline{x}|^{\gamma}-1\right)\,U_{\lambda}(\overline{x}).
\end{eqnarray}
 where $\xi\in \left(u(\overline{x}^{\lambda}),\,u(\overline{x})\right)$.

 Then from condition \eqref{uv-decay}, we know that for $\lambda$ sufficiently negative,
 \begin{eqnarray*}
 % \nonumber to remove numbering (before each equation)
 \frac{1}{C_{1}}\,p\,u^{p-1}(\overline{x})v(\overline{x})|\overline{x}|^{\gamma}-1
 &\leq& \frac{1}{C_{1}}\,p\,u^{p-1}(\overline{x})v(\overline{x})\frac{C_{2}}{u}-1 \nonumber \\
 &=& \frac{C_{2}}{C_{1}}\,p\,u^{p-2}(\overline{x})v(\overline{x})-1 \nonumber \\
 &<& 0
 \end{eqnarray*}
As a matter of fact, $(-\Delta)^{\frac{\alpha}{2}}U_{\lambda}(\overline{x})>0$, contradiction.
 Thus, \eqref{r-3} must hold. So we can conclude there exists an $\widetilde{x} \in \Sigma_\lambda$ such that
$$ V_{\lambda}(\widetilde{x}) = \min_{\Sigma_\lambda} V_\lambda < 0 .$$
Next, we verify that
\begin{equation}\label{r-5}
U_{\lambda}(\widetilde{x})<0.
\end{equation}
If not, on one hand, similar to \eqref{d-3}, we have
\begin{equation*}
  (-\Delta)^{\frac{\beta}{2}}V_{\lambda}(\widetilde{x})\leq\frac{C}{|\widetilde{x}|^{\beta}}V_{\lambda}(\widetilde{x})<0.
\end{equation*}
On the other hand, it follows from mean value theorem that
\begin{eqnarray*}
% \nonumber to remove numbering (before each equation)
  (-\Delta)^{\frac{\beta}{2}}V_{\lambda}(\widetilde{x}) &=&
   (-\Delta)^{\frac{\beta}{2}}v(\widetilde{x}^{\lambda})-(-\Delta)^{\frac{\beta}{2}}v(\widetilde{x})\\
   &=&u^{p}(\widetilde{x}^{\lambda})-u^{p}(\widetilde{x}) \\
   &=& p\vartheta^{p-1}U_{\lambda}(\widetilde{x})\\
   &\geq& pu^{p-1}(\widetilde{x})U_{\lambda}(\widetilde{x})\\ &\geq& 0,
\end{eqnarray*}
where $\vartheta\in\left(u(\widetilde{x}),\,u(\widetilde{x}^{\lambda})\right)$. This contradiction deduces \eqref{r-5}.

Thus, in terms of the above estimates and mean value theorem, we can derive
\begin{eqnarray}\label{r-6}
% \nonumber to remove numbering (before each equation)
   (-\Delta)^{\frac{\alpha}{2}}U_{\lambda}(\overline{x}) &\geq&
   u^{p-1}(\overline{x}^{\lambda})V_{\lambda}(\overline{x})+\left(\frac{1}{C_{1}}pu^{p-1}(\overline{x})v(\overline{x})|\overline{x}|^{\gamma}-1\right)U_{\lambda}(\overline{x})
   \nonumber \\
   &\geq&  u^{p-1}(\overline{x})V_{\lambda}(\overline{x})+\left(\frac{1}{C_{1}}pu^{p-1}(\overline{x})v(\overline{x})|\overline{x}|^{\gamma}-1\right)U_{\lambda}(\overline{x})
\end{eqnarray}
and
\begin{equation}\label{r-7}
   (-\Delta)^{\frac{\alpha}{2}}V_{\lambda}(\widetilde{x})
   \geq pu^{p-1}(\widetilde{x})U_{\lambda}(\widetilde{x}).
\end{equation}
Let
\begin{eqnarray*}
% \nonumber to remove numbering (before each equation)
  C_{1}(\widetilde{x}) &=& -pu^{p-1}(\widetilde{x})\\
   &\sim& \frac{1}{|\widetilde{x}|^{\gamma(p-1)}},
\end{eqnarray*}
\begin{eqnarray*}
% \nonumber to remove numbering (before each equation)
  C_{2}(\overline{x}) &=& 1-\frac{1}{C_{1}}pu^{p-1}(\overline{x})v(\overline{x})|\overline{x}|^{\gamma}>0,
\end{eqnarray*}
and
\begin{eqnarray*}
% \nonumber to remove numbering (before each equation)
  C_{3}(\overline{x}) &=& -u^{p-1}(\overline{x}) \\
   &\sim& \frac{1}{|\overline{x}|^{\gamma(p-1)}}
\end{eqnarray*}
for sufficiently negative $\lambda$. Then in terms of Theorem \ref{Th2} , we know that one of $U_{\lambda}(x)$ and $V_{\lambda}(x)$ must be nonnegative in $\Sigma_{\lambda}$ for sufficiently negative $\lambda$. Without loss of generality, we can suppose that
\begin{equation}\label{r-8}
  U_{\lambda}(x)\geq 0, \quad x\in\Sigma_{\lambda}.
\end{equation}
To show that \eqref{r-8} also holds for $V_{\lambda}(x)$, we argue by contradiction again. If $V_{\lambda}(x)$ is negative at some point in $\Sigma_{\lambda}$, then there must exist an $x_{0} \in \Sigma_\lambda$ such that
$$ V_{\lambda}(x_{0}) = \min_{\Sigma_\lambda} V_\lambda < 0 .$$
Then from a similar argument, we can show
\begin{equation*}
  0>\frac{CV_{\lambda}(x_{0})}{|x_{0}|^{\beta}}\geq(-\Delta)^{\frac{\beta}{2}}V_{\lambda}(x_{0})\geq pu^{p-1}(x_{0}^{\lambda})U_{\lambda}(x_{0})\geq0.
\end{equation*}
 Hence, the proof of Step 1 is completed.

\noindent \textup{\textbf{Step 2.}}  Step1 provides a starting point, from which we can now move the plane $T_{\lambda}$ to the right along the $x_{1}$-axis until its limiting position as long as \eqref{r-1} holds. More precisely, let
 \begin{equation*}
 \lambda_0 := \sup \{ \lambda \mid U_\mu (x) \geq 0,\, V_\mu (x) \geq 0, \; x \in \Sigma_\mu, \; \mu \leq \lambda \},
 \end{equation*}
 then the behavior of $u$ and $v$ at infinity guarantee $\lambda_{0}<\infty$.

In the sequel, we claim that $u$ is symmetric about the limiting plane $T_{\lambda_0}$, that is to say
\begin{equation}\label{r-9}
 U_{\lambda_0}(x)= V_{\lambda_0}(x)  \equiv 0 , \;\; x \in \Sigma_{\lambda_0}.
 \end{equation}
By the definition of $\lambda_{0}$, we first show that either
$$U_{\lambda_0}(x)= V_{\lambda_0}(x)  \equiv 0 , \;\; x \in \Sigma_{\lambda_0},$$
or
$$U_{\lambda_0}(x),\,V_{\lambda_0}(x) > 0 , \;\; x \in \Sigma_{\lambda_0}.$$
To prove this, without loss of generality,
we assume there a point $\widetilde{x}\in\Sigma_{\lambda_{0}}$ such that
$$U_{\lambda_{0}}(\widetilde{x})= \min_{\Sigma_{\lambda_{0}}} U_{\lambda_{0}}=0,$$
then it must be revealed that
$$ U_{\lambda_0}(x) \equiv 0 , \;\; x \in \Sigma_{\lambda_0}.$$
If not, on one hand
\begin{eqnarray*}
% \nonumber to remove numbering (before each equation)
  &&(-\Delta)^{\frac{\alpha}{2}}U_{\lambda_{0}}(\widetilde{x})\nonumber\\
   &=&  C_{n,\alpha}\,PV\int_{\mathbb{R}^{n}}\frac{U_{\lambda_{0}}(\widetilde{x})-U_{\lambda_{0}}(y)}{|\widetilde{x}-y|^{n+\alpha}}dy \nonumber\\
   &=& C_{n,\alpha}\,PV\int_{\mathbb{R}^{n}}\frac{-U_{\lambda_{0}}(y)}{|\widetilde{x}-y|^{n+\alpha}}dy\nonumber\\
   &=& C_{n,\alpha}\,PV\int_{\Sigma_{\lambda_{0}}}\frac{-U_{\lambda_{0}}(y)}{|\widetilde{x}-y|^{n+\alpha}}dy+C_{n,\alpha}\,PV\int_{\widetilde{\Sigma}_{\lambda_{0}}}\frac{-U_{\lambda_{0}}(y)}{|\widetilde{x}-y|^{n+\alpha}}dy
    \nonumber\\
    &<& C_{n,\alpha}\,PV\int_{\Sigma_{\lambda_{0}}}\frac{-U_{\lambda_{0}}(y)}{|\widetilde{x}-y^{\lambda}|^{n+\alpha}}dy+C_{n,\alpha}\,PV\int_{\Sigma_{\lambda_{0}}}\frac{U_{\lambda_{0}}(y)}{|\widetilde{x}-y^{\lambda}|^{n+\alpha}}dy
    \nonumber\\
   &=&0.
\end{eqnarray*}
On the other hand,
\begin{eqnarray*}
% \nonumber to remove numbering (before each equation)
   (-\Delta)^{\frac{\alpha}{2}}U_{\lambda_{0}}(\widetilde{x})&=&
   (-\Delta)^{\frac{\alpha}{2}}u_{\lambda_{0}}(\widetilde{x})-(-\Delta)^{\frac{\alpha}{2}}u(\widetilde{x}) \nonumber \\ &=& v_{\lambda_{0}}(\widetilde{x})u_{\lambda_{0}}^{q-1}(\widetilde{x})-u_{\lambda_{0}}(\widetilde{x})-v(\widetilde{x})u^{q-1}(\widetilde{x})+u(\widetilde{x}) \nonumber \\
  &=&  v_{\lambda_{0}}(\widetilde{x})u_{\lambda_{0}}^{p-1}(\widetilde{x})-v(\widetilde{x})u^{p-1}(\widetilde{x})   \nonumber \\
  &=& V_{\lambda_{0}}(\widetilde{x})u^{p-1}(\widetilde{x})\geq0.
\end{eqnarray*}
 That's a contradiction. Then it follows from  \,\, $U_{\lambda_{0}}(x)=-U_{\lambda_{0}}(x^{\lambda_{0}})$ \,\,that
$$ U_{\lambda_0}(x) \equiv 0 , \;\; x \in \mathbb{R}^{n}.$$
Similarly, it can be easy to verify that
$$ V_{\lambda_0}(x) \equiv 0 , \;\; x \in \mathbb{R}^{n}.$$
Therefore, if \eqref{r-9} is violated, then
\begin{equation}\label{r-10}
U_{\lambda_0}(x),\,V_{\lambda_0}(x) > 0 , \;\; x \in \Sigma_{\lambda_0}.
\end{equation}

Next, we prove that the plane can still move further in this case. To be more rigorous, there exists $\varepsilon>0$ such that
\begin{equation}\label{r-11}
  U_{\lambda}(x),\,V_{\lambda}(x) \geq 0 , \;\; x \in \Sigma_{\lambda}
\end{equation}
for any $\lambda\in \left(\lambda_{0},\lambda_{0}+\varepsilon\right)$. This is a contradiction with the definition of $\lambda_{0}$, then the assertion \eqref{r-9} holds. We now focus to prove \eqref{r-11}.

From \eqref{r-10}, we have the following bounded away from zero estimate
\begin{equation*}
  U_{\lambda_{0}}(x),\,V_{\lambda_{0}}(x) \geq C_{\delta}>0 , \;\; x \in \overline{\Sigma_{\lambda_{0}-\delta}\cap B_{R_{0}}(0)}
\end{equation*}
for some $C_{\delta}>0$ and $R_{0}>0$. By the continuity of $U_{\lambda}(x)$ and $V_{\lambda}(x)$ with respect to $\lambda$, then there exists a positive constant $\varepsilon$ such that
\begin{equation*}
  U_{\lambda}(x),\,V_{\lambda}(x) \geq 0 , \;\; x \in \Sigma_{\lambda_{0}-\delta}\cap B_{R_{0}}(0)
\end{equation*}
for any $\lambda\in \left(\lambda_{0},\lambda_{0}+\varepsilon\right)$.

Furthermore, on the basis of Theorem \ref{Th2} , we know that if
 $$U_{\lambda}(x^0) = \min_{\Sigma_\lambda} U_\lambda < 0 \quad \mbox{and} \quad V_{\lambda}(x^1) = \min_{\Sigma_\lambda} V_\lambda < 0,$$
 then there exists a positive consant $R_{0}$ large enough such that one of $x^{0}$ and $x^{1}$ must be in $B_{R_{0}}(0)$. We may suppose $|x^{0}|<R_{0}$. Thus, we can derive
\begin{equation}\label{r-12}
  x^{0}\in\left(\Sigma_{\lambda}\backslash\Sigma_{\lambda_{0}-\delta}\right)\cap B_{R_{0}}(0).
\end{equation}
Next, we show that \eqref{r-12} also holds for $x^{1}$. Note that from a similar argument in the proof of \ref{Th3}, we have $$U_{\lambda}(x^{1})<0 \quad and \quad V_{\lambda}(x^{0})<0. $$ If $x^{1}\in \Sigma_{\lambda}\cap B^{c}_{R_{0}}(0)$, then from \eqref{d-3}, \eqref{Big2},\eqref{r-4}, mean value theorem and \eqref{n-5}, we have
\begin{eqnarray}\label{r-14}
% \nonumber to remove numbering (before each equation)
  0>\frac{CV_{\lambda}(x^{1})}{|x^{1}|^{\beta}}
  &\geq&(-\Delta)^{\frac{\beta}{2}}V_{\lambda}(x^{1})= p\vartheta^{p-1}(x^{1})U_{\lambda}(x^{1})\nonumber\\
  &\geq& p\vartheta^{p-1}(x^{1})U_{\lambda}(x^{0})\geq
  pu^{p-1}(x^{1})U_{\lambda}(x^{0}),
\end{eqnarray}
and
\begin{eqnarray}\label{r-15}
% \nonumber to remove numbering (before each equation)
  0 &>&  \frac{CU_{\lambda}(x^{0})}{(\delta+\varepsilon)^{\alpha}} \geq (-\Delta)^{\frac{\alpha}{2}}U_{\lambda}(x^{0}) \nonumber\\
   &\geq& u^{p-1}(x^{0})V_{\lambda}(x^{0})+\left( \frac{1}{C_{1}}p|x^{0}|^{\gamma}v(x^{0})u^{p-1}(x^{0})-1\right)U_{\lambda}(x^{0}) \nonumber\\
  &\geq& u^{p-1}(x^{0})V_{\lambda}(x^{1})+\left( \frac{1}{C_{1}}p|x^{0}|^{\gamma}v(x^{0})u^{p-1}(x^{0})-1\right)U_{\lambda}(x^{0}) .
\end{eqnarray}
for sufficiently small $\delta$, $\varepsilon$ and $\vartheta\in\left(u_{\lambda}(x^{1}),\,u(x^{1})\right)$.
Hence, a simple modification and combination of the above inequalities deduces that
\begin{eqnarray}\label{r-13}
% \nonumber to remove numbering (before each equation)
  1 &\leq& (\delta+\varepsilon)^{\alpha}\left\{Cp|x^{1}|^{\beta}u^{p-1}(x^{0})u^{p-1}(x^{1})+C'\left[\frac{1}{C_{1}}p|x^{0}|^{\gamma}v(x^{0})u^{p-1}(x^{0})-1 \right] \right\}\nonumber\\
  &\leq& (\delta+\varepsilon)^{\alpha}\left\{Cp\frac{1}{|x^{1}|^{\gamma(p-1)-\beta}}u^{p-1}(x^{0})+C'\left[\frac{1}{C_{1}}p|x^{0}|^{\gamma}v(x^{0})u^{p-1}(x^{0})-1\right] \right\}
\end{eqnarray}
for sufficiently small $\delta$, $\varepsilon$ and large $R_{0}$. Since $\gamma(p-1)-\beta>0$,  $|x^{1}|>R_{0}$ and $x^{0}\in\left(\Sigma_{\lambda}\backslash\Sigma_{\lambda_{0}-\delta}\right)\cap B_{R_{0}}(0)$, then $Cp\frac{1}{|x^{1}|^{\gamma(p-1)-\beta}}u^{p-1}(x^{0})+C'\left[\frac{1}{C_{1}}p|x^{0}|^{\gamma}v(x^{0})u^{p-1}(x^{0})-1\right] $ is bounded, which implies that \eqref{r-13} must not be valid for sufficiently small $\delta$ and $\varepsilon$. This contradiction yields that
 $$x^{1}\in\left(\Sigma_{\lambda}\backslash\Sigma_{\lambda_{0}-\delta}\right)\cap B_{R_{0}}(0).$$
Finally, from \eqref{r-14} and \eqref{r-15}, we have
\begin{equation*}
  (-\Delta)^{\frac{\alpha}{2}}U_{\lambda}(x^{0})-u^{p-1}(x^{0})V_{\lambda}(x^{0})+\left(1-\frac{1}{C_{1}}p|x^{0}|^{\gamma}v(x^{0})u^{p-1}(x^{0})\right)U_{\lambda}(x^{0})\geq0
\end{equation*}
and
\begin{equation*}
  (-\Delta)^{\frac{\beta}{2}}V_{\lambda}(x^{1})-pu^{p-1}(x^{1})U_{\lambda}(x^{1})\geq0
\end{equation*}
Then combining with Theorem \ref{Th3}\,, we conclude that
\begin{equation*}
  U_{\lambda}(x),\,V_{\lambda}(x) \geq 0 , \;\; x \in \left(\Sigma_{\lambda}\backslash\Sigma_{\lambda_{0}-\delta}\right)\cap B_{R_{0}}(0)
\end{equation*}
for sufficiently small $\delta$ and $\varepsilon$. Hence, \eqref{r-11} holds.

Consequently, we must have
$$ U_{\lambda_0}(x)= V_{\lambda_0}(x)  \equiv 0 , \;\; x \in \Sigma_{\lambda_0}.$$
Since $x_1$ direction can be chosen arbitrarily, so we can conclude that the positive solution $u$ is radially symmetric and monotone decreasing with respect to some point in $\mathbb{R}^{n}$. This completes the proof of Theorem \ref{Th1} .
\end{proof}

\section*{Acknowledgments} This work is supported by the National Natural Science Foundation of China (NNSF Grant No. 11671414 and No. 11771218). The authors would like to deeply thank to Ma Lingwei for her selfless help.

\bibliography{bibliography}

\end{document}